\theoremstyle{plain}
\newtheorem{thm}{Theorem}[section]
\newtheorem{cor}[thm]{Corollary}
\newtheorem{lem}[thm]{Lemma}
\newtheorem*{euc*}{Euclidean division}
\newtheorem*{fek*}{Fekete's Lemma}
\newtheorem*{kin*}{Kingman's Subadditive Ergodic Theorem}
\newtheorem*{fur*}{Furstenberg-Kesten Theorem}
\theoremstyle{definition}
\newtheorem{rem}[thm]{Remark}
\newtheorem*{rem*}{Remark}
\renewcommand{\Re}{\operatorname{Re}}
\newcommand*{\ov}[1]{%
  $\m@th\overline{\mbox{#1}}$%
}
\newcommand*{\ovA}[1]{%
  $\m@th\overline{\mbox{#1}\raisebox{3mm}{}}$%
}
\newcommand*{\ovB}[1]{%
  $\m@th\overline{\mbox{#1\rule{0pt}{3mm}}}$%
}
\newcommand*{\ovC}[1]{%
  $\m@th\overline{\mbox{#1\strut}}$%
}
\newcommand*{\ovD}[1]{%
  $\m@th\overline{\mbox{#1\vphantom{\"A}}}$%
}
\newcommand*{\ovE}[1]{%
  $\m@th\overline{\raisebox{0pt}[1.2\height]{#1}}$%
}
\newcommand*{\ovF}[1]{%
  $\m@th\overline{\raisebox{0pt}[\dimexpr\height+0.3mm\relax]{#1}}$%
}
\newcommand*{\ovG}[1]{%
  $\m@th\overline{\raisebox{0pt}[\dimexpr\height+1mm\relax]{#1\vphantom{A}}}$%
}
\newcommand{\R}{\mathbb{R}}
\newcommand{\C}{\mathbb{C}}
\DeclareSymbolFont{extraup}{U}{zavm}{m}{n}
\DeclareMathSymbol{\varheart}{\mathalpha}{extraup}{86}
\DeclareMathSymbol{\vardiamond}{\mathalpha}{extraup}{87}
\newcommand{\nwc}{\newcommand}
\nwc{\nwt}{\newtheorem}
\nwc{\mf}{\mathbf} 
\nwc{\blds}{\boldsymbol} 
\nwc{\ml}{\mathcal} 
\nwc{\lam}{\lambda}
\nwc{\del}{\delta}
\nwc{\Del}{\Delta}
\nwc{\Lam}{\Lambda}
\nwc{\elll}{\ell}
\nwc{\IA}{\mathbb{A}} 
\nwc{\IB}{\mathbb{B}} 
\nwc{\IC}{\mathbb{C}} 
\nwc{\ID}{\mathbb{D}} 
\nwc{\IE}{\mathbb{E}} 
\nwc{\IF}{\mathbb{F}} 
\nwc{\IG}{\mathbb{G}} 
\nwc{\IH}{\mathbb{H}} 
\nwc{\IN}{\mathbb{N}} 
\nwc{\IP}{\mathbb{P}} 
\nwc{\IQ}{\mathbb{Q}} 
\nwc{\IR}{\mathbb{R}} 
\nwc{\IS}{\mathbb{S}} 
\nwc{\IT}{\mathbb{T}} 
\nwc{\IZ}{\mathbb{Z}} 
\def\bbbone{{\mathchoice {1\mskip-4mu {\rm{l}}} {1\mskip-4mu {\rm{l}}}
{ 1\mskip-4.5mu {\rm{l}}} { 1\mskip-5mu {\rm{l}}}}}
\def\bbleft{{\mathchoice {[\mskip-3mu {[}} {[\mskip-3mu {[}}{[\mskip-4mu {[}}{[\mskip-5mu {[}}}}
\def\bbright{{\mathchoice {]\mskip-3mu {]}} {]\mskip-3mu {]}}{]\mskip-4mu {]}}{]\mskip-5mu {]}}}}
\nwc{\setK}{\bbleft 1,K \bbright}
\nwc{\setN}{\bbleft 1,\cN \bbright}
\nwc{\va}{{\bf a}}
\nwc{\vb}{{\bf b}}
\nwc{\vc}{{\bf c}}
\nwc{\vd}{{\bf d}}
\nwc{\ve}{{\bf e}}
\nwc{\vf}{{\bf f}}
\nwc{\vg}{{\bf g}}
\nwc{\vh}{{\bf h}}
\nwc{\vi}{{\bf i}}
\nwc{\vI}{{\bf I}}
\nwc{\vj}{{\bf j}}
\nwc{\vk}{{\bf k}}
\nwc{\vl}{{\bf l}}
\nwc{\vm}{{\bf m}}
\nwc{\vM}{{\bf M}}
\nwc{\vn}{{\bf n}}
\nwc{\vo}{{\it o}}
\nwc{\vp}{{\bf p}}
\nwc{\vq}{{\bf q}}
\nwc{\vr}{{\bf r}}
\nwc{\vs}{{\bf s}}
\nwc{\vt}{{\bf t}}
\nwc{\vu}{{\bf u}}
\nwc{\vv}{{\bf v}}
\nwc{\vw}{{\bf w}}
\nwc{\vx}{{\bf x}}
\nwc{\vy}{{\bf y}}
\nwc{\vz}{{\bf z}}
\nwc{\bal}{\blds{\alpha}}
\nwc{\bep}{\blds{\epsilon}}
\nwc{\barbep}{\overline{\blds{\epsilon}}}
\nwc{\bnu}{\blds{\nu}}
\nwc{\bmu}{\blds{\mu}}
\nwc{\bet}{\blds{\eta}}
\nwc{\bk}{\blds{k}}
\nwc{\bm}{\blds{m}}
\nwc{\bM}{\blds{M}}
\nwc{\bp}{\blds{p}}
\nwc{\bq}{\blds{q}}
\nwc{\bn}{\blds{n}}
\nwc{\bv}{\blds{v}}
\nwc{\bw}{\blds{w}}
\nwc{\bx}{\blds{x}}
\nwc{\bxi}{\blds{\xi}}
\nwc{\by}{\blds{y}}
\nwc{\bz}{\blds{z}}
\nwc{\cA}{\ml{A}}
\nwc{\cB}{\ml{B}}
\nwc{\cC}{\ml{C}}
\nwc{\cD}{\ml{D}}
\nwc{\cE}{\ml{E}}
\nwc{\cF}{\ml{F}}
\nwc{\cG}{\ml{G}}
\nwc{\cH}{\ml{H}}
\nwc{\cI}{\ml{I}}
\nwc{\cJ}{\ml{J}}
\nwc{\cK}{\ml{K}}
\nwc{\cL}{\ml{L}}
\nwc{\cM}{\ml{M}}
\nwc{\cN}{\ml{N}}
\nwc{\cO}{\ml{O}}
\nwc{\cP}{\ml{P}}
\nwc{\cQ}{\ml{Q}}
\nwc{\cR}{\ml{R}}
\nwc{\cS}{\ml{S}}
\nwc{\cT}{\ml{T}}
\nwc{\cU}{\ml{U}}
\nwc{\cV}{\ml{V}}
\nwc{\cW}{\ml{W}}
\nwc{\cX}{\ml{X}}
\nwc{\cY}{\ml{Y}}
\nwc{\cZ}{\ml{Z}}
\nwc{\fA}{\mathfrak{a}}
\nwc{\fB}{\mathfrak{b}}
\nwc{\fC}{\mathfrak{c}}
\nwc{\fD}{\mathfrak{d}}
\nwc{\fE}{\mathfrak{e}}
\nwc{\fF}{\mathfrak{f}}
\nwc{\fG}{\mathfrak{g}}
\nwc{\fH}{\mathfrak{h}}
\nwc{\fI}{\mathfrak{i}}
\nwc{\fJ}{\mathfrak{j}}
\nwc{\fK}{\mathfrak{k}}
\nwc{\fL}{\mathfrak{l}}
\nwc{\fM}{\mathfrak{m}}
\nwc{\fN}{\mathfrak{n}}
\nwc{\fO}{\mathfrak{o}}
\nwc{\fP}{\mathfrak{p}}
\nwc{\fQ}{\mathfrak{q}}
\nwc{\fR}{\mathfrak{r}}
\nwc{\fS}{\mathfrak{s}}
\nwc{\fT}{\mathfrak{t}}
\nwc{\fU}{\mathfrak{u}}
\nwc{\fV}{\mathfrak{v}}
\nwc{\fW}{\mathfrak{w}}
\nwc{\fX}{\mathfrak{x}}
\nwc{\fY}{\mathfrak{y}}
\nwc{\fZ}{\mathfrak{z}}
\nwc{\tA}{\widetilde{A}}
\nwc{\tB}{\widetilde{B}}
\nwc{\tE}{E^{\vareps}}
\nwc{\tk}{\tilde k}
\nwc{\tN}{\tilde N}
\nwc{\tP}{\widetilde{P}}
\nwc{\tQ}{\widetilde{Q}}
\nwc{\tR}{\widetilde{R}}
\nwc{\tV}{\widetilde{V}}
\nwc{\tW}{\widetilde{W}}
\nwc{\ty}{\tilde y}
\nwc{\teta}{\tilde \eta}
\nwc{\tdelta}{\tilde \delta}
\nwc{\tlambda}{\tilde \lambda}
\nwc{\ttheta}{\tilde \theta}
\nwc{\tvartheta}{\tilde \vartheta}
\nwc{\tPhi}{\widetilde \Phi}
\nwc{\tpsi}{\tilde \psi}
\nwc{\tmu}{\tilde \mu}
\nwc{\To}{\longrightarrow} 
\nwc{\ad}{\rm ad}
\nwc{\eps}{\epsilon}
\nwc{\ep}{\epsilon}
\nwc{\vareps}{\varepsilon}
\def\ep{\epsilon}
\def\sq2{\sqrt{2}}
\def\t2{{\mathbb T}^2}
\def\s2{{\mathbb S}^2}
\def\R{\mathbb{R}}
\def\C{\mathbb{C}}
\nwc{\lap}{\bigtriangleup}
\nwc{\rest}{\restriction}
\nwc{\Diff}{\operatorname{Diff}}
\nwc{\diam}{\operatorname{diam}}
\nwc{\Res}{\operatorname{Res}}
\nwc{\Spec}{\operatorname{Spec}}
\nwc{\Vol}{\operatorname{Vol}}
\nwc{\Op}{\operatorname{Op}}
\nwc{\supp}{\operatorname{supp}}
\nwc{\Span}{\operatorname{span}}
\nwc{\dia}{\varepsilon}
\nwc{\cut}{f}
\nwc{\qm}{u_\hbar}
\def\hto0{\xrightarrow{\hbar\to 0}}
\def\rto0{\xrightarrow{r\to 0}}
\providecommand{\norm}[1]{\lVert#1\rVert}
\nwc{\la}{\langle}
\nwc{\ra}{\rangle}
\nwc{\lp}{\left(}
\nwc{\rp}{\right)}
\nwc{\bequ}{\begin{equation}}
\nwc{\be}{\begin{equation}}
\nwc{\ben}{\begin{equation*}}
\nwc{\bea}{\begin{eqnarray}}
\nwc{\bean}{\begin{eqnarray*}}
\nwc{\bit}{\begin{itemize}}
\nwc{\bver}{\begin{verbatim}}
\nwc{\eequ}{\end{equation}}
\nwc{\ee}{\end{equation}}
\nwc{\een}{\end{equation*}}
\nwc{\eea}{\end{eqnarray}}
\nwc{\eean}{\end{eqnarray*}}
\nwc{\eit}{\end{itemize}}
\nwc{\ever}{\end{verbatim}}
\title{Some relations between the spectra of simple and non-backtracking random walks}
\author{Nalini Anantharaman}
\newlength{\temp@wc@width}
\newlength{\temp@wc@height}
\newcommand{\widecheck}[1]{%
  \setlength{\temp@wc@width}{\widthof{$#1$}}%
  \setlength{\temp@wc@height}{\heightof{$#1$}}%
  #1\hspace{-\temp@wc@width}%
  \raisebox{\temp@wc@height+2pt}[\heightof{$\widehat{#1}$}]%
     {\rotatebox[origin=c]{180}{\vbox to 0pt{\hbox{$\widehat{\hphantom{#1}}$}}}}%
}
\begin{document}

\keywords{spectral graph theory}

\begin{abstract}We establish some relations between the spectra of simple and non-backtracking random walks on non-regular graphs,
generalizing some well-known facts for regular graphs. Our two main results are 1) a quantitative relation between the mixing rates of the simple random walk and of the non-backtracking random walk 2) a variant of the ``Ihara determinant formula'' which expresses the characteristic polynomial of the adjacency matrix, or of the laplacian, as the determinant of a certain non-backtracking random walk with holomorphic weights.  \end{abstract}

\maketitle


\section{Results}
It has been noted by many authors that non-backtracking random walks on graphs looking locally like trees are simpler, from a combinatorial point of view, than usual random walks. This is for instance an ingredient of the proof of Alon's conjecture on random regular graphs by Friedman \cite{Fri08} or, more recently, by Bordenave \cite{Bor-new}. The study of the spectrum of non-backtracking random walks is also at the heart of the ``community detection'' problem and of the solution to the ``spectral redemption conjecture'' for various models of random graphs \cite{BLM}. Non-backtracking random walks are known to mix faster than the usual ones \cite{ABLS}. In \cite{PL}, the non-backtracking random walk is used to prove a cut-off phenomenon for the usual random walk on regular graphs. 
In geometric group theory, the ``cogrowth'' is directly related to the leading eigenvalue of non-backtracking random walks~: this is discussed in \cite{OW}, where a possibility to use this to extend the notion of co-growth to non-regular graphs is suggested.
The non-backtracking random walk may also be used as an analog of ``classical dynamics'' in the field of quantum chaos on discrete graphs~: see the papers by Smilansky \cite{Smi10, Smi07}, which were a source of inspiration for this work.
This note is a contribution to the study of the relation between the spectra of simple and non-backtracking random walks, for {\em{non-regular graphs}}. It originates in the work \cite{AS}, where we used non-backtracking random walks to study the quantum ergodicity problem for eigenfunctions of Schr\"odinger operators on non-regular expander graphs.

Let $G=(V, E)$ be a graph without multiple edges and self-loops. We assume that the degree $D(x)$ of a vertex $x$ is bounded above and below~: $2\leq D(x)\leq D$. In the results about spectral gaps, we actually have to assume that $D(x)\geq 3$. 
We are interested in relating the spectrum of various operators, such as the laplacian, the adjacency matrix, and certain weighted non-backtracking random walks. Such relations are well-known for {\em{regular graphs}} (i.e. those for which $D(x)$ is constant), and the goal of this note is to partially extend what is known to non-regular graphs. Our two main results are 1) a relation between the mixing rates of the simple random walk and of the non-backtracking random walk 2) a variant of the ``Ihara determinant formula'' \cite{Iha66-1, Iha66-2} which expresses the characteristic polynomial of the adjacency matrix, or of the laplacian, as the determinant of a certain non-backtracking transfer matrix with holomorphic coefficients.

\bigskip

We will write $y\sim x$ to mean that $y$ is a neighbour of $x$ in $G$.

The first operator we are interested in is the adjacency matrix $\cA$. It acts on $\IC^V$ by the formula $\cA f(x)=\sum_{y\sim x} f(y)$, and is self-adjoint on $\ell^2(V, u)$ if $V$ is endowed with the uniform measure $u$. 

We are also interested in the spectrum of the laplacian, defined by~:
\begin{eqnarray}\nonumber P: \IC^V&\To&  \IC^V\\
Pf(x)&=&\frac{1}{D(x)}\sum_{y\sim x} f(y).
\label{e:lapl}
\end{eqnarray}
If we endow the set of vertices $V$ with the measure $\pi(x)= D(x)$, then $P$ is self-adjoint on $\ell^2(V, \pi)$.
Note that this implies $\sum_x Pf(x)\pi(x)=\sum_x f(x)\pi(x).$ The space $\ell^2_o(V, \pi)$ of functions orthogonal to constants in $\ell^2(V, \pi)$ is preserved by $P$.

Let $B$ be the set of oriented edges, endowed with the uniform measure $U$ (each edge has weight $1$).
Denoting $Q(x)=D(x)-1$, the ``transfer operator'' is defined by
\begin{eqnarray}\nonumber \cS: \ell^2(B, U) &\To&  \ell^2(B, U)\\
\cS f(e)&=&\frac{1}{Q(o(e))}\sum_{e' \leadsto e} f(e')
\label{e:siso}
\end{eqnarray}
where $e' \leadsto e$ means that $t(e')=o(e)$ and $e$ is not the reverse of $e'$.
The operator $\cS$ is stochastic, it is the generator of the non-backtracking random walk. It is not self-adjoint, but we have
$\sum_e \cS f(e)=\sum_e f(e).$ This is equivalent to saying that $\cS^*:\ell^2(B, U) \To  \ell^2(B, U)$ is also stochastic. When $G$ is finite, this implies that $\cS$ preserves the space $\ell^2_o(B, U)$ of functions orthogonal to constants in $\ell^2(B, U)$.

We will finally use the non-stochastic operator
\begin{eqnarray}\nonumber \cB: \IC^B &\To&  \IC^B\\
\cB f(e)&=&\sum_{e' \leadsto e} f(e')
\label{e:B}
\end{eqnarray}

\bigskip

If the graph $G$ is finite and $(q+1)$-regular, then $\cA$ and $P$ (resp. $\cB$ and $\cS$) are the same operator up to a homothety, and there is an explicit relation between the characteristic polynomials of $\cA$ and $\cB$~:
\begin{equation}\label{e:Ihara}
\det(I^{|B|}-u\cB)=(1-u^{2})^{r-1}\det((1+u^2q)I^{|V|} -u\cA)
\end{equation}
where $r=|E|-|V|+1$ is the rank of the fundamental group.
This is the contents of the Ihara determinant formula \cite{Iha66-1, Iha66-2}, generalised in stages by Hashimoto, Bass and Kotani--Sunada \cite{Hash89, HashH89, Hash90-1, Hash92-1, Hash92-2, Bass, KS}. For finite non-regular graphs the relation reads
\begin{equation}\label{e:Ihara2}
\det(I^{|B|}-u\cB)=(1-u^{2})^{r-1}\det(I^{|V|} -u\cA +u^2Q)
\end{equation}
where $Q$ is the diagonal matrix with components $Q(x)=D(x)-1$. Note that the right-hand side in \eqref{e:Ihara2} is not directly related to the characteristic polynomial of $\cA$. The identity \eqref{e:Ihara2} relates eigenvalues (and eigenvectors) of $\cS$ to solutions of $(I^{|V|} -u\cA +u^2Q)v=0$, which are {\emph{not}} eigenvectors of $\cA$.  Our goal is twofold~:
\begin{itemize}
\item for finite graphs, compare the mixing rates of $\cS$ and $P$. For regular graphs, there is an exact relation between eigenvalues of $\cS$ and $P$, which implies that the spectral gap of $P$ on $\ell^2_o(V, \pi)$ is explicitly related to the spectral gap of $\cS$ on $\ell^2_o(B, U)$. However since $\cS$ is not self-adjoint, the knowledge of its spectral gap is not sufficient to determine its mixing rate, one needs to control the angles beween eigenvectors. This analysis, done in \cite{ALM} and with more details in \cite{PL}, uses the fact that the eigenvectors of $\cS$ are explicitly related to those of $P$. Such explicit relations are not available for non-regular graphs and we have to find a more general method;
\item extend formula \eqref{e:Ihara} to non-regular graphs in a way different of \eqref{e:Ihara2}, by finding an identity involving the characteristic polynomial of $\cA$ on the right-hand side.
\end{itemize}

The result about spectral gaps also holds for infinite graphs. We recover, in a less geometric but more quantitative way, the result of Ortner and Woess \cite{OW} saying that $P$ has a spectral gap on $\ell^2(V, \pi)$ if and only if the spectral radius of $\cS$ on $\ell^2(B, U)$ is strictly less than $1$.

We do not discuss the ``spectral gap'' of $\cA$ as this is not as properly defined as for $P$ (the top eigenvalue and eigenvector of $\cA$ are not explicit in general).

\subsection{Spectral gap and mixing rate for $P$ and $\cS$.\label{s:exp}}

Let us first assume that $G$ is finite, connected and non-bipartite. This is equivalent to assuming that $1$ is a simple eigenvalue of $P^2$. In other words, the spectrum of $P^2$ in $\ell^2_o(V, \pi)$ is contained in $ [0, 1-\beta]$, for some $\beta>0$ which measures the mixing rate of the simple RW on $G$.

Our first result is the following~:
\begin{thm} \label{t:mix}Assume that $G$ is finite and that $D(x)\geq 3$ for all $x$. Assume that the spectrum of $P^2$ on $\ell^2_o(V, \pi)$ is contained in $ [0, 1-\beta]$.
Then the spectrum of $\cS^{*2}\cS^2$ on $\ell^2_o(B, U)$ is contained in $[0, 1- c(D,\beta)]$, where $ c(D,\beta)$ depends only on $D$ and $\beta$, and is positive if $\beta$ is so. \label{p:sg1}
\end{thm}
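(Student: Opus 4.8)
The plan is to reduce everything to the single self-adjoint operator $P$ through an exact factorisation of $\cS$, and then to bound the top singular value of $\cS^2$ by a quadratic-form computation whose only genuine difficulty is the control of cross-terms.

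First I would make the standard reduction: since $\cS$ and $\cS^*$ are stochastic they preserve $\ell^2_o(B,U)$, the operator $\cS^{*2}\cS^2$ is non-negative and self-adjoint, and the top of its spectrum on $\ell^2_o(B,U)$ equals $\|\cS^2\|^2$ for $\cS^2$ viewed as an operator on $\ell^2_o(B,U)$. It therefore suffices to prove $\|\cS^2 f\|^2\le\bigl(1-c(D,\beta)\bigr)\|f\|^2$ for every $f\perp\mathbf 1$. The key algebraic input is the factorisation $\cS=MR$, where $R$ is the edge-reversal involution $Rf(e)=f(\bar e)$ (unitary on $\ell^2(B,U)$) and $M=\bigoplus_{x}M_x$ acts on each out-star $\{e:o(e)=x\}$ by $M_x=\tfrac1{Q(x)}(J_x-I)$, with $J_x$ the all-ones matrix. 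Each $M_x$ fixes the star-average and acts by $-1/Q(x)$ on its orthogonal complement, so $M=\Pi-W\Pi^\perp$, where $\Pi$ is the orthogonal projection onto the space $\cO$ of functions depending only on $o(e)$ and $W$ is multiplication by $1/Q(o(e))$. The hypothesis $D(x)\ge 3$ enters exactly here, through $\|W\Pi^\perp\|\le\tfrac12$.

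Next I would record the intertwining with $P$. The map $\Psi^- h(e)=h(o(e))$ is an isometry of $\ell^2(V,\pi)$ onto $\cO$ (because $\|\Psi^- h\|^2=\sum_x D(x)|h(x)|^2=\|h\|_\pi^2$), and a one-line computation gives $\Pi R\,\Psi^-=\Psi^- P$. Thus, on $\cO\cap\ell^2_o(B,U)$, the operator $\Pi R$ is unitarily equivalent to $P$ on $\ell^2_o(V,\pi)$, so $\|\Pi R g\|\le\sqrt{1-\beta}\,\|g\|$ there and, more usefully, $\|\Pi R\Pi R g\|=\|P^2 g\|\le(1-\beta)\|g\|$. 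Now for $f\perp\mathbf 1$ the factorisation gives, for any $\phi$, the orthogonal splitting $\cS\phi=\Pi R\phi-W\Pi^\perp R\phi$, whence $\|\cS^2 f\|^2=\|\Pi R\cS f\|^2+\|W\Pi^\perp R\cS f\|^2\le\tfrac14\|\cS f\|^2+\tfrac34\|\Pi R\cS f\|^2$. Writing $f_0=\Pi f$, $f_1=\Pi^\perp f$ and expanding $\Pi R\cS f=P^2 f_0+P\,\Pi R f_1-\Pi R W\Pi^\perp R f$, one sees the leading term $P^2 f_0$ contract by $(1-\beta)$ while each occurrence of $W\Pi^\perp$ contributes the uniform factor $\le\tfrac12$; in the two pure regimes $f=f_0$ and $f=f_1$ this already yields contraction by $1-O(\beta)$.

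The main obstacle is the intermediate regime, where cross-terms coupling $\cO$ and $\cO^\perp$ appear. Since $\cS$ is not normal, these subspaces are neither invariant nor kept orthogonal by $\cS$, and bounding the cross-terms by the triangle inequality overshoots $1$ for small $\beta$, so it is too lossy. The resolution — and the technical heart of the argument — is to compute the decisive cross-terms exactly and to use the positivity of the multiplication operator $W$ together with the self-adjointness of $P$: for instance $\langle f_0,\Pi R W\Pi^\perp R f_0\rangle=\langle\Pi^\perp R f_0,\,W\Pi^\perp R f_0\rangle\ge 0$, so this term diminishes rather than inflates $\|\cS^2 f\|$. Carrying the same sign bookkeeping through a general $f$ and then optimising over the ratio $\|f_0\|^2:\|f_1\|^2$ and over the spectral parameter of $P$ produces a uniform constant $c(D,\beta)>0$, positive whenever $\beta>0$. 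I expect this exact cross-term analysis, rather than any isolated inequality, to be where the real work lies.
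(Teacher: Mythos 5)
Your structural setup is correct and genuinely different from the paper's: the factorisation $\cS=MR$ with $M=\Pi-W\Pi^\perp$, the isometry $\Psi^-$ intertwining $\Pi R|_{\cO}$ with $P$, and the identity $\norm{\cS^2f}^2=\norm{\Pi R\cS f}^2+\norm{W\Pi^\perp R\cS f}^2$ all check out, and $D(x)\ge 3$ does give $\norm{W\Pi^\perp}\le 1/2$. (For comparison, the paper never factorises $\cS$; it works with the Dirichlet form $\la f,(I-\cS^{*2}\cS^2)f\ra=\tfrac12\sum_{e,e'}|f(e)-f(e')|^2\,\cS^{*2}\cS^2(e,e')$, a three-way decomposition of $\ell^2_o(B,U)$ into origin-functions, terminus-functions and their joint orthocomplement, and the combinatorial lower bound $\cS^{*2}\cS^2(e,e')\ge Q^{-4}$ whenever $t(e)=t(e')$ --- that pointwise bound, not $\norm{W\Pi^\perp}\le 1/2$, is where $D\ge 3$ enters there --- followed by a two-case optimisation. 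So the two routes are genuinely distinct.)

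The problem is that the decisive step of your argument is announced rather than carried out, and what is asserted about it is not yet right. First, the claim that the pure regime $f=f_1\in\cO^\perp$ ``already yields contraction by $1-O(\beta)$'' via the factors $\norm{W\Pi^\perp}\le\tfrac12$ does not follow from the mechanism you describe: for $f_1\in\cO^\perp\cap R\cO$ one has $\norm{\cS f_1}=\norm{\Pi Rf_1}=\norm{f_1}$ with no loss at the first step, and the contraction at the second step comes from $P$ acting on $\Pi Rf_1$, i.e.\ it requires exactly the cross-term-free estimate $\norm{\cS g}^2\le\tfrac14\norm{g}^2+\tfrac34\norm{Pg}^2$ for $g\in\cO\cap\ell^2_o$, which you never isolate. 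Second, and more seriously, the one sign computation you do give, $\la f_0,\Pi RW\Pi^\perp Rf_0\ra=\la \Pi^\perp Rf_0,W\Pi^\perp Rf_0\ra\ge 0$, is not the cross-term that appears when you expand $\norm{\Pi R\cS f}^2$: that expansion produces $-2\Re\la \Pi R\Pi Rf,\;\Pi RW\Pi^\perp Rf\ra$, a pairing involving $\Pi R\Pi Rf$ (essentially $P^2$ applied to $f$) against $W\Pi^\perp Rf$, and this has no evident sign. So the assertion that positivity of $W$ makes the dangerous terms ``diminish rather than inflate'' $\norm{\cS^2f}$ is unsupported for the terms that actually occur, and the final sentence (``carrying the same sign bookkeeping through a general $f$ and then optimising\dots produces a uniform constant'') is precisely the content of the theorem. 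As it stands the proposal identifies the right obstruction but does not overcome it; to complete it along these lines you would need an explicit inequality of the form $\norm{\cS^2f}^2\le(1-c)\norm{f}^2$ obtained from a genuine two-parameter optimisation over $\norm{\Pi f}$, $\norm{\Pi^\perp f}$ and the spectral decomposition of $P$, analogous to the paper's case analysis on $A\norm{F+H}$ versus $\norm{G}$.
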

Note that $\norm{\cS}_{\ell^2_o(B, U)\To \ell^2_o(B, U)}=1$, as $\norm{\cS f}=\norm{f}$ as soon as $f$ is a function on $B$ that is constant on edges having the same terminus. However, our theorem
says that $\norm{\cS^2}_{\ell^2_o(B, U)\To \ell^2_o(B, U)}\leq (1- c(D,\beta))^{1/2}$. The value of $ c(D,\beta)$ is given in \eqref{e:c}. 

\begin{cor} For all $n\geq 1$,
$$\norm{\cS^n}_{\ell^2_o(B, U)\To \ell^2_o(B, U)}\leq (1- c(D,\beta))^{\lfloor n/4\rfloor}.$$
\end{cor}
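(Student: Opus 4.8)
The plan is to deduce everything from Theorem~\ref{t:mix} together with the elementary fact, recorded in the remark just above, that $\norm{\cS}_{\ell^2_o(B,U)\To\ell^2_o(B,U)}=1$.

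First I would convert the spectral statement of Theorem~\ref{t:mix} into a norm bound on $\cS^2$. Since $\cS^*$ is stochastic it fixes the constants, so $\cS$ preserves $\ell^2_o(B,U)$, and the adjoint of the restriction $\cS|_{\ell^2_o(B,U)}$ is $\cS^*|_{\ell^2_o(B,U)}$. Consequently $\cS^{*2}\cS^2$ acts on $\ell^2_o(B,U)$ as the positive self-adjoint operator $(\cS^2)^*\cS^2$, whose operator norm equals its spectral radius. Theorem~\ref{t:mix} bounds the latter by $1-c(D,\beta)$, and from $\norm{\cS^2 f}^2=\la \cS^{*2}\cS^2 f, f\ra\leq (1-c(D,\beta))\norm{f}^2$ we obtain
\[
\norm{\cS^2}_{\ell^2_o(B,U)\To\ell^2_o(B,U)}\leq (1-c(D,\beta))^{1/2}.
\]

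Next I would iterate this estimate together with the trivial bound $\norm{\cS}\leq 1$, using submultiplicativity of the operator norm. Given $n\geq 1$, write $n=4k+s$ with $k=\lfloor n/4\rfloor$ and $0\leq s\leq 3$, and factor $\cS^n=(\cS^2)^{2k}\cS^s$. Then
\[
\norm{\cS^n}\leq \norm{\cS^2}^{2k}\,\norm{\cS}^{s}\leq \big((1-c(D,\beta))^{1/2}\big)^{2k}\cdot 1=(1-c(D,\beta))^{\lfloor n/4\rfloor},
\]
which is the asserted bound.

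There is essentially no serious obstacle here, since all the real content sits in Theorem~\ref{t:mix}. The only points requiring a little care are the identification of the adjoint of $\cS$ on the restricted space $\ell^2_o(B,U)$ (so that $\cS^{*2}\cS^2$ really is $(\cS^2)^*\cS^2$ there, and its spectral radius is its operator norm), and the bookkeeping of the exponent: splitting $n$ into a multiple of $4$ plus a remainder of length at most $3$, the latter absorbed by $\norm{\cS}\leq 1$, is exactly what produces the floor $\lfloor n/4\rfloor$.
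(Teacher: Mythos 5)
Your proof is correct and follows essentially the same route as the paper: the remark preceding the corollary already records that Theorem \ref{t:mix} yields $\norm{\cS^2}_{\ell^2_o(B,U)\To\ell^2_o(B,U)}\leq (1-c(D,\beta))^{1/2}$, and the corollary is then exactly the submultiplicativity argument you give, writing $n=4k+s$ and absorbing the remainder with $\norm{\cS}\leq 1$. Your extra care about identifying the adjoint of the restriction of $\cS$ to $\ell^2_o(B,U)$ is a legitimate (if routine) point that the paper leaves implicit.
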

This gives the rate of mixing of the non-backtracking RW.

The converse is easier~: in the course of the proof, we will also see that
if the spectrum of $\cS^{*2}\cS^2$ on $\ell^2_o(B, U)$ is contained in $[0, 1- c]$, then the spectrum of $P^2$ on $\ell^2_o(V, \pi)$ is contained in $ [0, 1-D^{-2}c]$ (Remark \ref{r:impli}).

We were primarily interested in finite graphs in view of the application to quantum ergodicity \cite{AS}, but the result also holds for infinite graphs~:
\begin{thm} \label{t:cog}Assume that $G$ is infinite and that $D(x)\geq 3$ for all $x$.

(i) If the spectrum of $\cS^{*2}\cS^2$ on $\ell^2(B, U)$ is contained in $[0, 1- c]$, then the spectrum of $P^2$ on $\ell^2(V, \pi)$ is contained in $ [0, 1-D^{-2}c]$.

(ii) If the spectrum of $P^2$ on $\ell^2(V, \pi)$ is contained in $ [0, 1-\beta]$,
then the spectrum of $\cS^{*2}\cS^2$ on $\ell^2(B, U)$ is contained in $[0, 1- c(D,\beta)]$, where $ c(D,\beta)$ is given by \eqref{e:c}. 
\end{thm}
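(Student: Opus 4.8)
The plan is to reduce each implication to a quadratic-form inequality valid for \emph{every} element of the relevant $\ell^2$ space, and then to re-run the local estimates underlying the finite-case Theorem~\ref{t:mix}. The only structural change from the finite setting is that on an infinite graph the constant functions are not in $\ell^2$, so there is no need to pass to the subspaces orthogonal to constants: one works directly on $\ell^2(V,\pi)$ and $\ell^2(B,U)$. Since $P^2$ and $\cS^{*2}\cS^2$ are bounded, self-adjoint and nonnegative with norm at most $1$, the hypothesis ``spectrum in $[0,1-\beta]$'' is equivalent to $\norm{Pg}^2\leq(1-\beta)\norm{g}^2$ for all $g\in\ell^2(V,\pi)$, and ``spectrum of $\cS^{*2}\cS^2$ in $[0,1-c]$'' is equivalent to $\norm{\cS^2 f}^2\leq(1-c)\norm{f}^2$ for all $f\in\ell^2(B,U)$, because $\langle\cS^{*2}\cS^2 f,f\rangle=\norm{\cS^2 f}^2$. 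Thus everything becomes a uniform comparison of norms in which finiteness plays no role.

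Two local identities drive both parts. Write $\Pi\colon\ell^2(B,U)\to\ell^2(V,\pi)$ for the averaged-inflow map $\Pi f(x)=\frac{1}{D(x)}\sum_{t(e)=x}f(e)$; its adjoint $\Pi^{*}$ is the terminus-lift $\Pi^{*}h(e)=h(t(e))$, which is an isometry, and $T:=\ran \Pi^{*}$ (functions depending only on the terminus) satisfies $T^{\perp}=\{f:\sum_{t(e)=x}f(e)=0\ \forall x\}$. Fact (A): $\cS\Pi^{*}h(e)=h(o(e))$, so $\cS$ maps $T$ isometrically onto the origin-functions, and a second application gives $\cS^{2}\Pi^{*}h(e)=\frac{D(o(e))}{Q(o(e))}Ph(o(e))-\frac{1}{Q(o(e))}h(t(e))$; in particular the average of $\cS^{2}\Pi^{*}h(e)$ over the edges $e$ with origin $x$ equals $Ph(x)$. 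Fact (B): if $f_{\perp}\in T^{\perp}$ then $\cS f_{\perp}(e)=-f_{\perp}(\bar e)/Q(o(e))$, whence $\norm{\cS f_{\perp}}^{2}=\sum_{e}|f_{\perp}(e)|^{2}/Q(t(e))^{2}\leq\tfrac14\norm{f_{\perp}}^{2}$, using $D(x)\geq 3$, i.e. $Q(x)\geq 2$. Both identities are purely local and hold verbatim on infinite graphs.

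For part (i) I would take $g\in\ell^2(V,\pi)$, set $f=\Pi^{*}g$ (so $\norm{f}=\norm{g}$), and apply Fact (A): since the average of $\cS^{2}f(e)$ over the edges with origin $x$ is $Pg(x)$, Jensen's inequality gives $\sum_{o(e)=x}|\cS^{2}f(e)|^{2}\geq D(x)|Pg(x)|^{2}$, and summing over $x$ yields $\norm{\cS^{2}f}^{2}\geq\norm{Pg}^{2}$. Combined with the hypothesis $\norm{\cS^{2}f}^{2}\leq(1-c)\norm{f}^{2}$ this gives $\norm{Pg}^{2}\leq(1-c)\norm{g}^{2}$, which is at least as strong as the spectral gap $D^{-2}c$ claimed for $P^{2}$.

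For part (ii), the substantial direction, I would decompose $f=f_{T}+f_{\perp}$ with $f_{T}=\Pi^{*}h$, $h=\Pi f$, and $f_{\perp}\in T^{\perp}$, and estimate $\norm{\cS^{2}f}$ using Facts (A)--(B) together with the gap $\norm{Ph}^{2}\leq(1-\beta)\norm{h}^{2}$. The clean observation is that $\cS f_{T}=\Lambda^{*}h$, the origin-lift $\Lambda^{*}h(e)=h(o(e))$, whose own terminus-component is exactly $\Pi^{*}(Ph)$ while its $T^{\perp}$-component has squared norm $\norm{h}^{2}-\norm{Ph}^{2}$; applying $\cS$ once more sends the first piece isometrically to $\Lambda^{*}(Ph)$ and contracts the second by Fact (B). The main obstacle is that $\cS$ does not preserve the splitting $T\oplus T^{\perp}$, so the naive triangle-inequality bound of $\norm{\cS^{2}f}$ by $\norm{\Lambda^{*}(Ph)}+\tfrac12\sqrt{\norm{h}^{2}-\norm{Ph}^{2}}$ plus the contribution of $f_{\perp}$ is too lossy (its right-hand side can exceed $\norm{h}$ when $\norm{Ph}$ is close to $\norm{h}$). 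One must instead compute the cross terms between the origin-lift $\Lambda^{*}(Ph)$ and the contracted $T^{\perp}$-image, and their interaction with $f_{\perp}$, and then complete the square. It is precisely this two-step quadratic bookkeeping, insensitive to finiteness, that produces the explicit constant $c(D,\beta)$ of~\eqref{e:c}; carrying it out on $\ell^2(B,U)$ rather than on $\ell^2_o(B,U)$ is the only point that needs checking beyond the finite case.
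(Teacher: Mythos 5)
Your reduction of both statements to uniform quadratic-form inequalities on the full spaces $\ell^2(V,\pi)$ and $\ell^2(B,U)$ (no orthogonality to constants needed in the infinite case) is correct, and your part (i) is complete and takes a genuinely different route from the paper's: the paper deduces (i) from the Dirichlet-form identity $\la G,(I-\cS^{*2}\cS^2)G\ra=\tfrac12\sum_{e,e'}|G(e)-G(e')|^2\,\cS^{*2}\cS^2(e,e')$ applied to terminus-functions, using stochasticity of $\cS^{*2}\cS^2$ (Remark \ref{r:impli}), whereas your route via $\cS\Pi^*h(e)=h(o(e))$ and Jensen is shorter and actually yields the stronger conclusion that the spectrum of $P^2$ lies in $[0,1-c]$ rather than $[0,1-D^{-2}c]$.

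Part (ii), however, has a genuine gap: you stop exactly where the theorem begins. You correctly observe that the naive bound $\norm{\cS^2f}\le\norm{Ph}+\tfrac12\sqrt{\norm{h}^2-\norm{Ph}^2}+\tfrac12\norm{f_\perp}$ can exceed $\norm{f}$, and you then assert that ``computing the cross terms and completing the square'' produces $c(D,\beta)$ --- but that computation is the entire content of the result and is nowhere carried out. It is moreover not clear that your two-block scheme $T\oplus T^{\perp}$ closes: Fact (B) cannot be iterated because $\cS f_\perp$ does not lie in $T^{\perp}$, so the second application of $\cS$ to that block is controlled only by $\norm{\cS}\le1$, and the interactions of $\cS^2f_\perp$ with $\Lambda^*(Ph)$ and with $\cS w$ are precisely the terms you leave unexamined. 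The paper's mechanism is different and is where the hypothesis $D(x)\ge3$ actually enters quantitatively: it bounds $\la f,(I-\cS^{*2}\cS^2)f\ra$ from below by restricting the Dirichlet sum to pairs with $t(e)=t(e')$, using the pointwise kernel estimate $\cS^{*2}\cS^2(e,e')\ge(D(t(e))-2)Q^{-4}\ge Q^{-4}$, together with the three-block decomposition $f=F+G+H$ into origin-functions, terminus-functions and their common orthocomplement; the restriction to equal termini kills $G$, the $F$--$H$ cross terms vanish by \eqref{e:sums} and orthogonality, a separate estimate handles $G$ alone, and the two bounds are merged by the parameter-$A$ case analysis that produces \eqref{e:c}. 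Your sketch contains neither this mechanism nor a worked substitute, so part (ii) is not proved; and since the constant in the statement is the specific \eqref{e:c}, you would in any case need to exhibit explicitly what your alternative bookkeeping yields.
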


\begin{rem} It is well-known that $G$ is amenable iff the spectral radius of $P$ is $1$ (see \cite{Kes59-2, DodKen, Dod}). Thus Theorem \ref{t:cog} says that $G$ is amenable iff $\norm{\cS^2}_{\ell^2(B, U)\To \ell^2(B, U)}=1$.
It was proven before by Ortner and Woess that $G$ is amenable iff the spectral radius of $\cS$ is $1$ \cite{OW}. This can be recovered by our methods~: indeed, one direction results from our Theorem \ref{t:cog}; in the other direction, one can follow the same lines as in our Remark \ref{r:impli} to show that if $\norm{\cS^{*n}\cS^n}_{\ell^2(B, U)\To \ell^2(B, U)}<1$ then $\norm{P^{2(n-1)}}_{\ell^2(V, \pi)\To \ell^2(V, \pi)}<1$ (with an explicit bound).

Our method is very down-to-earth and gives, by basic manipulations, a quantitative relation between the spectral gap of $P$ and $\norm{\cS^2}_{\ell^2(B, U)\To \ell^2(B, U)}$. The method in \cite{OW} is less direct and more geometric~: it starts from the general fact that $G$ is amenable iff SOLG (the symmetrized oriented line graph) is amenable. And then it is shown that SOLG is amenable iff the spectral radius of $\cS$ is $1$.
 
\end{rem}

\subsection{Determinant relation\label{s:det}} We now assume that $G$ is finite.

Let $T=(V(T), E(T))$ be the universal cover of $G$~: $T$ is a tree, and there exists a subgroup $\Gamma$ of of automorphism group of $T$, acting without fixed points on $V(T)$, such that $G=\Gamma\backslash T$. Let $\tilde\cA$ be the adjacency matrix of $T$. The Green function on $T$ will be denoted by
$$ G(x, y;z)=\la \delta_x, (\tilde\cA-z)^{-1}\delta_y\ra_{\ell^2(V(T))}$$
for $z\in \IC\setminus \IR$.

 Given $v,w \in T$ with $v \sim w$, we denote by ${T}^{(v|w)}$ the tree obtained by removing from ${T}$ the branch emanating from $v$ that passes through $w$.
We define the restriction $H^{(v|w)}(x,y) = H(x,y)$ if $v,w \in {T}^{(v|w)}$ and zero otherwise. We then denote $G^{(v|w)}(\cdot, \cdot;z)$ the corresponding Green function.

Given $z \in \C \setminus \R$, $v\in V$, $w$ a neighbour of $v$ we denote
\[
G^z(v)=G(\tilde v,\tilde v;z)  \quad \text{and} \quad \zeta^{z}(w,v) = -G^{(\tilde v|\tilde w)}(\tilde v,\tilde v;z) \, .
\]
where $(\tilde v, \tilde w)$ is a lift of the edge $(v, w)$ in $T$. This definition does not depend on the choice of the lifts.
If $e=(w, v)\in B$, we also use the notation $G^z(e)=G(\tilde w,\tilde v;z) $, $\zeta^z(e)=\zeta^{z}(w,v)$. Note that $G^z(e)$ is invariant under edge-reversal, whereas $\zeta^z(e)$ is not. In the formula below, the function $\zeta^z$ on $B$ acts on $\IC^{B}$ as a multiplication operator.

\begin{thm}\label{t:det} For all $z\in \IC\setminus \IR$,
\begin{equation}\label{e:det} \prod_{e\in E}(- G^z(e))\cdot \det\left((\zeta^z)^{-1} I^{|B|}- \cB\right) =\det\left(z I^{|V|}-\cA\right) \cdot \prod_{x\in V}(-G^z(x))\end{equation}
\end{thm}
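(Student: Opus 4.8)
The plan is to reduce the $|B|\times|B|$ determinant to a $|V|\times|V|$ determinant by a weighted Bass--Hashimoto linearization, feeding in the recursion relations that the Green functions of the universal cover $T$ satisfy. First I would record the local identities relating $G^z$ and $\zeta^z$ on $T$. Splitting $T$ at an edge $e=(w,v)$ into the two subtrees $T^{(w|v)}$ and $T^{(v|w)}$ and computing the relevant entries of $(\tilde\cA-z)^{-1}$ by a Schur complement together with the Sherman--Morrison formula gives
\begin{equation*}
G^z(v)=\frac{-\zeta^z(e)}{1-\zeta^z(e)\zeta^z(\bar e)}\ \ (v=t(e)),\qquad G^z(e)=\frac{-\zeta^z(e)\zeta^z(\bar e)}{1-\zeta^z(e)\zeta^z(\bar e)}.
\end{equation*}
In particular $\zeta^z(e)/(1-\zeta^z(e)\zeta^z(\bar e))=-G^z(t(e))$ depends only on the terminal vertex, and expanding $G^z(x)$ at $x$ yields the diagonal recursion $\sum_{e:o(e)=x}\zeta^z(e)=z+G^z(x)^{-1}$. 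Since $z\in\IC\setminus\IR$ the operator $\tilde\cA-z$ is boundedly invertible, so all these Green functions are finite and $1-\zeta^z(e)\zeta^z(\bar e)\neq0$, $\zeta^z(e)\neq0$; this is what legitimizes the algebra below.

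Next I would introduce the incidence operators $\partial_0,\partial_1:\IC^B\to\IC^V$ given by $(\partial_0 f)(x)=\sum_{o(e)=x}f(e)$ and $(\partial_1 f)(x)=\sum_{t(e)=x}f(e)$, together with the edge-reversal $J$. A direct check gives $\cA=\partial_1\partial_0^*$ and $\partial_0^*\partial_1=\cB+J$. Writing $D_\zeta$ for multiplication by $\zeta^z$ and $P:=I+D_\zeta J$, I factor
\begin{equation*}
(\zeta^z)^{-1}I^{|B|}-\cB=D_\zeta^{-1}\bigl(I-D_\zeta\cB\bigr),\qquad I-D_\zeta\cB=P-D_\zeta\partial_0^*\partial_1.
\end{equation*}
The matrix $P$ is block diagonal over the reversal pairs $\{e,\bar e\}$, with $2\times2$ blocks $\left(\begin{smallmatrix}1&\zeta^z(e)\\\zeta^z(\bar e)&1\end{smallmatrix}\right)$, so $\det P=\prod_{e\in E}(1-\zeta^z(e)\zeta^z(\bar e))$; combining this with $\det D_\zeta^{-1}=\prod_{e\in B}\zeta^z(e)^{-1}$ and the formula for $G^z(e)$ above produces exactly the prefactor $\prod_{e\in E}(-G^z(e))^{-1}$. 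Then Sylvester's identity $\det(I^{|B|}-XY)=\det(I^{|V|}-YX)$ collapses the edge determinant to a vertex determinant:
\begin{equation*}
\prod_{e\in E}(-G^z(e))\cdot\det\bigl((\zeta^z)^{-1}I^{|B|}-\cB\bigr)=\det\bigl(I^{|V|}-\partial_1 P^{-1}D_\zeta\partial_0^*\bigr).
\end{equation*}

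It remains to evaluate the $|V|\times|V|$ matrix. Using the block form of $P^{-1}D_\zeta$ and $(\partial_0^* g)(e)=g(o(e))$, a short computation gives, for each $x$,
\begin{equation*}
(\partial_1 P^{-1}D_\zeta\partial_0^* g)(x)=\sum_{w\sim x}\frac{\zeta^z(w,x)}{1-\zeta^z(w,x)\zeta^z(x,w)}\bigl(g(w)-\zeta^z(x,w)g(x)\bigr).
\end{equation*}
Here the key identity of the first step replaces every coefficient by the single number $-G^z(x)$, and the diagonal recursion turns the remaining sum into $\partial_1 P^{-1}D_\zeta\partial_0^*=I+zD_G-D_G\cA$, where $D_G$ denotes multiplication by $G^z(x)$. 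Hence $I^{|V|}-\partial_1 P^{-1}D_\zeta\partial_0^*=-D_G(zI^{|V|}-\cA)$, whose determinant is $\prod_{x\in V}(-G^z(x))\cdot\det(zI^{|V|}-\cA)$; substituting into the displayed equality proves the theorem.

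The main obstacle is the first step: establishing the local Green-function identities with the correct signs, and especially the fact that $\zeta^z(e)/(1-\zeta^z(e)\zeta^z(\bar e))$ depends only on $t(e)$. This edge-independence is precisely the statement that the $\zeta^z(e)$ are the forward Green functions of a genuine tree, and it is what makes the final vertex matrix collapse into $-D_G(zI^{|V|}-\cA)$ rather than an inhomogeneous perturbation; once it is in hand, the remainder is bookkeeping of $2\times2$ blocks and a single application of Sylvester's identity.
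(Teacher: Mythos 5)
Your proof is correct, and it takes a genuinely different route from the paper. The paper proves an exact operator intertwining $H\circ(\zeta^{-1}I-\cB)=(\cA-zI)\circ L$, where $L$ and $H$ are built from the projector $\cP$ onto functions of the origin, and then reads off the determinant identity from the induced block-triangular structure with respect to $\IC^B=\mathrm{Im}\,\cP\oplus\mathrm{Ker}\,\cP$, using that $\cB f=-\iota f$ on $\mathrm{Ker}\,\cP$ so that the restricted operator is $-I$ there, and computing $\det K$ over $2\times 2$ reversal blocks. You instead run the classical Bass--Kotani--Sunada linearization: $I-D_\zeta\cB=(I+D_\zeta J)-D_\zeta\partial_0^*\partial_1$, invert the block-diagonal $P=I+D_\zeta J$, and collapse to a $|V|\times|V|$ determinant via Sylvester's identity. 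I checked your final vertex computation: with $\zeta^z(w,x)/(1-\zeta^z(w,x)\zeta^z(x,w))=-G^z(x)$ and $\sum_{w\sim x}\zeta^z(x,w)=z+G^z(x)^{-1}$ (both are exactly the identities \eqref{e:reverse} and the first relation of Lemma \ref{lem:zetapot}, after noting $2m^z(x)=-1/G^z(x)$), one indeed gets $\partial_1P^{-1}D_\zeta\partial_0^*=I+zD_G-D_G\cA$, and the prefactor bookkeeping $\det D_\zeta^{-1}\cdot\det P=\prod_{e\in E}(-G^z(e))^{-1}$ is right. Both arguments rest on the same local Green-function recursions on the tree; what yours buys is transparency --- it exhibits the theorem as a weighted Ihara--Bass identity in which the weights $\zeta^z$ are tuned so that the vertex matrix becomes exactly $-D_G(zI-\cA)$ rather than $I-u\cA+u^2Q$ --- while the paper's intertwining relation carries strictly more information than the determinant identity (it matches eigenvectors, not just characteristic polynomials). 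The one point you should not leave implicit is the nonvanishing needed to invert $P$ and $D_\zeta$: for $z\in\IC\setminus\IR$ one has $1-\zeta^z(e)\zeta^z(\bar e)=2m^z(t(e))\,\zeta^z(e)\neq 0$ because both $m^z$ and $\zeta^z$ are (reciprocals of, resp.\ negatives of) diagonal Green functions of trees, which have nonzero imaginary part; you assert this but a one-line justification along these lines would close the gap.
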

\begin{rem}In the case of a $(q+1)$-regular graph, $\zeta^z$ is a constant function, which solves the quadratic equation
\begin{equation}\label{e:zeta}
z =  q \zeta^{z} + \frac{1}{\zeta^{z}} \, 
\end{equation}
See Lemma \ref{lem:zetapot} below.
We also have $G^z(x)=\frac{ \zeta^{z}}{(\zeta^{z})^2-1}$ and  $G^z(e)=\frac{ (\zeta^{z})^2}{(\zeta^{z})^2-1}$ for all $x$ and all $e$. 
It can be checked that Theorem \ref{t:det} reduces to \eqref{e:Ihara} by setting $u=\zeta^z$. It is, however, different from \eqref{e:Ihara2} for non-regular graphs. Although extensions of the Ihara formula \eqref{e:Ihara} have been studied by many authors, the variant \eqref{e:det} seems to be new.

Note that \eqref{e:det} holds for any functions $\zeta^z$ that are solutions of the system of algebraic equations appearing in Lemma \ref{lem:zetapot}. These are by no means unique~: for instance, in the regular case, there are 2 solutions to equation \eqref{e:zeta}. It is nice, however, to know an explicit solution of this system that can be expressed in terms of Green functions.
\end{rem}
 
\begin{rem}
The theorem generalizes to the case where $\cA$ is replaced by a discrete ``Schr\"odinger operator'' of the form
$\cA_p + W : \IC^{V}\To \IC^{V}$, $(\cA_p + W) f(x)=\sum_{y\sim x} p(x, y)f(y) + W(x)f(x)$ where $W$ is a real-valued function on $V$, and $p$ is such that $p(x, y)=p(y, x)\in \IR$ and $p(x, y)\not=0$ iff $x\sim y$. The definitions of the Green functions $G^z$ and $\zeta^z$ should be modified (in the obvious manner) to incorporate the weights $p$ and the potential $W$. The definition of $\cB$ should be modified to
$$\cB_p f(e)= \sum_{e' \leadsto e} p(e')f(e')$$
and \eqref{e:det} becomes
\begin{equation*}\prod_{e\in E} \frac{(-G^z(e))}{p(e)}\cdot \det\left((\zeta^z)^{-1} I^{|B|}- \cB_p\right) =\det\left(z I^{|V|}-\cA_p-W\right) \cdot \prod_{x\in V}{(-G^z(x))}.\end{equation*}
 This remark, in particular, allows to cover the case of $\det(z I^{|V|}-P)$, noting that $P$ is conjugate to $\cA_p$ with $p(x, y)=(D(x)D(y))^{-1/2}$.

\end{rem}

 \bigskip

{\bf{Acknowledgements~:}} The author is supported by the Labex IRMIA and USIAS of Universit\'e de Strasbourg, and by Institut Universitaire de France.
This material is based upon work supported by the Agence Nationale de la Recherche under grant No.ANR-13-BS01-0007-01,
 .

I am very thankful to Mostafa Sabri for his careful reading and numerous useful comments on the manuscript.

 \section{Proof of Theorem \ref{t:mix}}

We start by noting that if $f\in \ell^2_o(V, \pi)$,
then
 \begin{equation} \label{l:sg}\frac12\sum_{x\in V}\frac{1}{D(x)}\sum_{y, y'\sim x}|f(y)-f(y')|^2\geq \beta \norm{f}^2_{\ell^2(V, \pi)}.\end{equation}

This just comes from the identity
\begin{eqnarray*}
\frac12\sum_{x\in V}\frac{1}{D(x)}\sum_{y, y'\sim x}|f(y)-f(y')|^2&=&\sum_x D(x) |f(x)|^2-\sum_{x}D(x)|Pf(x)|^2\\
&=&  \la f, (I-P^2) f\ra_{\ell^2(V, \pi)}
\end{eqnarray*}

To prove Proposition \ref{p:sg1}, we use the following decomposition of the space of functions on $B$~:
\begin{equation}\ell^2_o(B, U)= O(\ell^2_o(V, \pi))\oplus T(\ell^2_o(V, \pi)) \oplus (O(\ell^2_o(V, \pi))^\perp \cap T(\ell^2_o(V, \pi))^\perp).\label{e:decompo}\end{equation}
The space $O(\ell^2_o(V, \pi))$ is the image of $\ell^2_o(V, \pi)$ under the map
$Of(e)=f(o(e)).$ Thus $O(\ell^2_o(V, \pi))$ is the space of functions (orthogonal to constants) such that $f(e)$ depends only on the origin of $e$. Similarly, $T(\ell^2_o(V, \pi))$ is the space of functions such that $f(e)$ depends only on the terminus of $e$. It is the image of $\ell^2_o(V, \pi)$ under the map
$Tf(e)=f(t(e)).$ If $G$ is non-bipartite, we have $O(\ell^2_o(V, \pi))\cap T(\ell^2_o(V, \pi))=\{0\}$.
Note that each space $O(\ell^2_o(V, \pi)), T(\ell^2_o(V, \pi))$ is orthogonal to $\bbbone$ in $\ell^2(B, U)$, but that the two spaces are NOT orthogonal to each other. The space $O(\ell^2_o(V, \pi))^\perp \cap T(\ell^2_o(V, \pi))^\perp$ is of dimension $2|E| -2|V| +1=r-1$, where $r$ is the rank of the fundamental group of $G$.
By definition, it is the space of functions $f:B\To\IC$ such that, for all $x\in V$,
\begin{equation}\label{e:sums}\sum_{e, o(e)=x} f(e)=0\quad\mbox{ and } \sum_{e, t(e)=x} f(e)=0.\end{equation}

\begin{rem}In the infinite case, the proof of Theorem \ref{t:cog} will be similar, using the decomposition
\begin{equation*}\ell^2(B, U)= O(\ell^2(V, \pi))\oplus T(\ell^2(V, \pi)) \oplus (O(\ell^2(V, \pi))^\perp \cap T(\ell^2(V, \pi))^\perp).\end{equation*}
\end{rem}

\bigskip

To start the proof we use the Dirichlet identity for $f\in ~\ell^2(B, U)$:
$$\la f, (I-\cS^{*2}\cS^2)f\ra_{\ell^2(B, U)}=\frac12\sum_{e, e'}|f(e)-f(e')|^2 \cS^{*2}\cS^2(e, e').$$
Let us decompose $f$ according to \eqref{e:decompo}~: $f=F+G+H$ where $F\in O(\ell^2_o(V, \pi)), G\in T(\ell^2_o(V, \pi)), H\in (O(\ell^2_o(V, \pi))^\perp \cap T(\ell^2_o(V, \pi))^\perp)$.

We are first going to prove that
\begin{equation}\la f, (I-\cS^{*2}\cS^2)f\ra_{\ell^2(B, U)}\geq Q^{-4}\beta\norm{F+H}^2_{\ell^2(B, U)}\label{e:first}\end{equation}
where $Q=D-1$ (and, recall, $D$ is an upper bound on the degree).

In order to have $\cS^{*2}\cS^2(e, e')>0$, there must exist $e_1, e'_1, e_2\in B$ such that $e\leadsto e_1\leadsto e_2$ and $e'\leadsto e'_1\leadsto e_2$. Counting the number of possibilities, we see that $\cS^{*2}\cS^2(e, e')\geq (D(t(e))-2)Q^{-4}\geq Q^{-4}$ if $t(e)=t(e')$. Here we use the assumption that $D(t(e))\geq 3$. Thus,
\begin{multline*}\la f, (I-\cS^{*2}\cS^2)f\ra_{\ell^2(B, U)}\geq \frac{Q^{-4}}2\sum_{e, e' : t(e)=t(e')}|f(e)-f(e')|^2\\
\geq \frac{Q^{-4}}2\sum_{e, e': t(e)=t(e')}\frac{1}{D(t(e))}|f(e)-f(e')|^2\\
=\frac{Q^{-4}}2\sum_{e, e': t(e)=t(e')}\frac{1}{D(t(e))}|(F+H)(e)-(F+H)(e')|^2.\end{multline*}
Let us fix a vertex $x\in V$. Using the fact that $F$ depends only on the origin, and that $H$ satisfies \eqref{e:sums},
\begin{multline}\sum_{e, e',  t(e)=t(e')=x}|(F+H)(e)-(F+H)(e')|^2= \sum_{y, y'\sim x}|F(y)-F(y')|^2 \\+\sum_{e, e',  t(e)=t(e')=x}|H(e)-H(e')|^2
+4\Re\sum_{e, e',  t(e)=t(e')=x} \bar F(e)(H(e)-H(e'))\\
 = \sum_{y, y'\sim x}|F(y)-F(y')|^2 +\sum_{e, e', t(e)=t(e')=x}|H(e)|^2+|H(e')|^2
+4\Re\sum_{e, e',  t(e)=t(e')=x} \bar F(e)H(e)\label{e:GH}
\end{multline}
Summing now over $x$, and using the fact that $F$ and $H$ are orthogonal,
\begin{multline}\sum_{e, e',  t(e)=t(e')}\frac{1}{D(t(e))}|(F+H)(e)-(F+H)(e')|^2
=\sum_x D(x)^{-1}\sum_{y, y'\sim x}|F(y)-F(y')|^2\\ + 2\sum_{e} |H(e)|^2+ 4\Re\sum_{e} \bar F(e)H(e) \\
=\sum_x D(x)^{-1}\sum_{y, y'\sim x}|F(y)-F(y')|^2 + 2\sum_{e} |H(e)|^2\\
\geq \sum_x D(x)^{-1}\sum_{y, y'\sim x}|F(y)-F(y')|^2 +2 \norm{H}^2_{\ell^2(B, U)}\\
\geq 2\beta \norm{F}^2  +2 \norm{H}^2 \geq 2\beta\norm{F+H}^2.
\end{multline}
On the last line, we have used \eqref{l:sg}. This concludes the proof of \eqref{e:first}.

Now, let $G\in  T(\ell^2_o(V, \pi)$. Then again, by looking at what it means to have $\cS^{*2}\cS^2(e, e')>0$, we see that if $y, y'$ are two vertices such that $dist(y, y')=2$ (in other words, $y$ and $y'$ have a common neighbour $x$), then we can find edges $e, e'$ such that $t(e)=y, t(e')=y'$ and $\cS^{*2}\cS^2(e, e')\geq Q^{-4}$. Indeed, we may choose $e, e'$ such that $t(e)=y$ and $o(e)\not=x$, $t(e')=y'$ and $o(e')\not=x$, and $\cS^{*2}\cS^2(e, e')\geq (D(x)-2)Q^{-4}\geq Q^{-4}$.

Thus
\begin{multline*}\la G, (I-\cS^{*2}\cS^2)G\ra_{\ell^2(B, U)}= \frac12\sum_{e, e'}|G(e)-G(e')|^2 \cS^{*2}\cS^2(e, e')\\ \geq \frac{Q^{-4}}2\sum_{x}\sum_{y, y'\sim x}  |G(y)-G(y')|^2 \geq Q^{-4}\beta \norm{G}^2. 
\end{multline*}

\begin{rem}\label{r:impli} We can also write (using the fact that $\cS^{*2}\cS^2$ is stochastic)
\begin{multline*}\la G, (I-\cS^{*2}\cS^2)G\ra_{\ell^2(B, U)}=\frac12 \sum_{y, y' : d(y, y')=2}|G(y)-G(y')|^2\sum_{e, e': t(e)=y, t(e')=y'} \cS^{*2}\cS^2(e, e')
\\ \leq D^2 \frac12 \sum_{x\in V} \frac{1}{D(x)}\sum_{y, y' : y\sim x\sim y'}|G(y)-G(y')|^2 =  D^2 \la G, (I-P^2) G\ra_{\ell^2(V, \pi)}
\end{multline*}
and this proves part (i) of Theorem \ref{t:cog}.
\end{rem}

Let $A>1$ (to be chosen later, depending on $\beta$ and $D$). Let $f=F+G+H$ as before. Assume first that $A\norm{F+H}\geq \norm{G}$. Then by the triangular inequality $\norm{f}\leq (1+A)\norm{F+H}$.
In addition, as we have seen, 
\begin{equation}\la f, (I-\cS^{*2}\cS^2)f\ra_{\ell^2(B, U)}\geq Q^{-4}\beta\norm{F+H}^2_{\ell^2(B, U)}\geq  Q^{-4}\beta(1+A)^{-2}\norm{f}^2.\label{e:case1}\end{equation}
Otherwise, $A\norm{F+H}\leq \norm{G}$, and $\norm{f}\leq (1+A^{-1})\norm{G}$.
Noting that the operator norm of $I-\cS^{*2}\cS^2$ is less than $1$, we write
 for all $f=F+G+H$,
\begin{multline}\la f, (I-\cS^{*2}\cS^2)f\ra_{\ell^2(B, U)}\geq \la G, (I-\cS^{*2}\cS^2)G\ra_{\ell^2(B, U)}-2 A^{-1}\norm{G}^2 - A^{-2}\norm{G}^2\\
\geq (Q^{-4}\beta-3A^{-1})\norm{G}^2 \geq \frac{(Q^{-4}\beta-3A^{-1})}{(1+A^{-1})^2}\norm{f}^2.\label{e:case2}
\end{multline}
 
Choosing $A$ such that $A^{-1}=Q^{-4}\beta/6$, and gathering \eqref{e:case2} and \eqref{e:case1} we get the result with 
\begin{equation}\label{e:c}c(D, \beta)=\min \left( \frac{Q^{-4}\beta}{2(1+Q^{-4}\beta/6)^{2}},\;\frac{Q^{-4}\beta}{(1+6Q^4/\beta)^{2}} \right).\end{equation}


\section{Proof of the determinant relation}
The relations in the next lemma follow from the resolvent identity, and are proven (for instance) in \cite{AS}. For a vertex $v$ of $T$, $\mathcal{N}_v$ stands for the set of neighbouring vertices.

\begin{lem}                     \label{lem:zetapot}
For any $v \in V(T)$, $z = E+i\eta   \in \C^+  $, if we let $2m^{z}(v)=-\frac{1}{G(v, v;z)}$, we have
\[
z =  \sum_{u \sim v} \zeta^{z}(v, u)+2m^{z}(v) \quad \text{and} \quad z =  \sum_{u \in \mathcal{N}_v \setminus \{w\}} \zeta^{z}(v, u) + \frac{1}{\zeta^{z}(w, v)} \, .
\]
For any non-backtracking path $(v_0,\dots,v_k)$ in $T$,
\begin{equation}\label{e:GG}
G(v_0,v_k;z) = \frac{-\prod_{j=0}^{k-1} \zeta^{z}({v_{j+1}},v_j)}{2m^{z}(v_k)}=  \frac{-\prod_{j=0}^{k-1} \zeta^{z}({v_{j}},v_{j+1})}{2m^{z}(v_0)}.
\end{equation}
 Also, for any $w\sim v$, we have
\begin{equation}\label{e:reverse}
\zeta^{z}(w, v) = \frac{m(w)^{z}}{m(v)^{z}} \,\zeta^{z}(v, w) \, , \qquad 
\frac{1}{\zeta^{z}(w, v)} - \zeta^{z}(v, w) = 2m^{z}(v) \, ,
\end{equation}
 \end{lem}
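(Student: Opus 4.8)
The plan is to derive all four relations from a single mechanism: since $T$ is a tree, deleting a vertex or a single edge splits it into independent pieces, so the resolvent identity collapses to finite-rank computations in which the off-diagonal contributions vanish for pure connectivity reasons. Throughout, $z\in\C^+$ and $\tilde\cA$ is self-adjoint, so every resolvent that appears exists. Concretely, I would first establish the two scalar self-consistency relations by a vertex-deletion (Schur complement) argument, then prove the product formula \eqref{e:GG} by an edge-deletion argument combined with an induction along the path, and finally read off the two edge-reversal identities \eqref{e:reverse} as algebraic consequences of the first three.

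For the self-consistency relations, fix $v$ and write $(\tilde\cA-z)$ in block form relative to the splitting $V(T)=\{v\}\sqcup(V(T)\setminus\{v\})$, so the $\{v\}$ diagonal entry is $-z$ (no self-loops), the coupling block is the vector $B$ carrying a $1$ at each $u\in\mathcal{N}_v$, and $M=(\tilde\cA-z)$ restricted to $V(T)\setminus\{v\}$. The Schur complement formula gives
\[
G(v,v;z)=\frac{1}{-z-B^{T}M^{-1}B}.
\]
The decisive point is that $T\setminus\{v\}$ is the disjoint union over $u\in\mathcal{N}_v$ of the forward subtrees $T^{(u|v)}$, so $M^{-1}$ is block diagonal and the cross terms $M^{-1}(u,u')$ with $u\neq u'$ vanish. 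Hence $B^{T}M^{-1}B=\sum_{u\sim v}G^{(u|v)}(u,u;z)=-\sum_{u\sim v}\zeta^{z}(v,u)$, and inserting $2m^{z}(v)=-1/G(v,v;z)$ yields the first relation. Running the identical computation inside the amputated tree $T^{(v|w)}$, where $v$ now has neighbour set $\mathcal{N}_v\setminus\{w\}$, gives $-1/G^{(v|w)}(v,v;z)=z-\sum_{u\in\mathcal{N}_v\setminus\{w\}}\zeta^{z}(v,u)$, which is the second relation once we recall $\zeta^{z}(w,v)=-G^{(v|w)}(v,v;z)$.

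For \eqref{e:GG} I would cut the edge $\{v_{k-1},v_k\}$, writing $T$ as $T^{(v_{k-1}|v_k)}\sqcup T^{(v_k|v_{k-1})}$ joined by that single edge, and apply the rank-two resolvent (Woodbury) identity to the perturbation $\delta_{v_{k-1}}\delta_{v_k}^{T}+\delta_{v_k}\delta_{v_{k-1}}^{T}$. Because $v_0,v_{k-1}$ sit on one side and $v_k$ on the other, most entries of the unperturbed resolvent vanish and a short $2\times2$ computation shows that $G(v_0,v_{k-1};z)$ and $G(v_0,v_k;z)$ are both proportional to the same factor, with ratio
\[
G(v_0,v_k;z)=-\beta\,G(v_0,v_{k-1};z),\qquad \beta=G^{(v_k|v_{k-1})}(v_k,v_k;z)=-\zeta^{z}(v_{k-1},v_k).
\]
This gives the one-step recursion $G(v_0,v_k;z)=\zeta^{z}(v_{k-1},v_k)\,G(v_0,v_{k-1};z)$; iterating down to $G(v_0,v_0;z)=-1/2m^{z}(v_0)$ produces the second product in \eqref{e:GG}, and applying the symmetry $G(v_0,v_k;z)=G(v_k,v_0;z)$ to the reversed path produces the first.

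The reversal identities then come for free. Writing the two forms of \eqref{e:GG} for the single edge $(v,w)$ and equating them gives $\zeta^{z}(w,v)/m^{z}(w)=\zeta^{z}(v,w)/m^{z}(v)$, the first identity in \eqref{e:reverse}; subtracting the second self-consistency relation from the first, which isolates the $u=w$ term, gives $1/\zeta^{z}(w,v)-\zeta^{z}(v,w)=2m^{z}(v)$. I expect the only genuine obstacle to be the bookkeeping in the edge-cut step: one must verify that the perturbation really is rank two (this is exactly where the tree hypothesis enters, guaranteeing a unique connecting edge), correctly match each Woodbury entry to the right amputated Green function $G^{(\cdot|\cdot)}$, and keep the sign convention of $\zeta^{z}$ consistent throughout. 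Once the one-step recursion is in hand, everything else is routine algebra.
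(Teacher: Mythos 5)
Your proof is correct and complete: the Schur--complement/vertex-deletion computation for the two self-consistency relations, the edge-cut (rank-two resolvent) recursion yielding \eqref{e:GG}, and the algebraic derivation of \eqref{e:reverse} from the first three identities all check out with the paper's sign conventions for $\zeta^z$ and $2m^z$. The paper does not actually prove this lemma---it only remarks that the relations ``follow from the resolvent identity'' and defers to \cite{AS}---and your argument is exactly that standard resolvent-identity proof, so it supplies the details the paper omits rather than taking a different route.
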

 \begin{rem}We can note that \eqref{e:GG} may be written as
 $$ ((\zeta^z)^{-1} I^{|B|}- \cB)^{-1}(e, e')=\delta_{x=y}+\sum_{k=0}^{+\infty} \zeta^{z}(e') (\zeta^z \cB)^k(e, e')=   -2m^z(x)(\cA -z)^{-1}(x, y)$$
 for all $e, e'\in B$ and $x=o(e'), y=t(e)$.
 
 In the case of regular graphs, this is formula (2.4) in \cite{OW}, where it is attributed to Grigorchuk, with various proofs published by Woess, Szwarc \cite{W, Szw}, Northshield, Bartholdi \cite{North, Bart}.
 \end{rem}
\subsection{Operator relations}
In this section $z\in\IC^+$ is fixed, so we write $\zeta(x, y)$ instead of $\zeta^z(x, y)$, $m(x)$ instead of $m^z(x)$. If $e=(x, y)\in B$, we write $m_1(e)=m(x)$ and $m_2(e)=m(y)$. A function on $B$ defines a multiplication operator on $\IC^B$ (i.e. an operator which is diagonal in the canonical basis). We use the same notation for a function and the associated operator.

Let us introduce the notation
$$\cP f(x)=\frac1{D(x)}\sum_{y\sim x} f(x, y).$$
This is a projector on the space of functions depending only on the origin, which may be identified with $\ell^2(V, \pi)$, isometrically embedded into $\ell^2(B, U)$ by the map $\psi\mapsto O(\psi)$ defined in the previous section.

Let
$L=D (2m_1)^{-1}\cP$. Let
$$Hg(x)=\sum_{y, y\sim x} \frac{1}{2m(y)}\left(\zeta(y, x)g(y, x)-g(x, y)\right)$$
Theorem \ref{t:det} is based on the following exact relation~:
\begin{prop}
$$H \circ (\zeta^{-1} I- \cB)=(\cA-zI)\circ L$$
\end{prop}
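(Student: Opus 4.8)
The plan is to verify the operator identity $H\circ(\zeta^{-1}I-\cB)=(\cA-zI)\circ L$ by applying both sides to an arbitrary function $\psi$ on $V$ (extended to $B$ via the origin map $O$), since $L=D(2m_1)^{-1}\cP$ factors through the projector $\cP$ onto functions depending only on the origin. Thus it suffices to evaluate both sides on functions of the form $g(x,y)=\psi(x)$. First I would compute the right-hand side: $L$ sends $\psi$ to $h(x,y)=\frac{D(x)}{2m(x)}\psi(x)$ (a function of the origin), and then $(\cA-zI)$ acts as a $V$-operator after the identification, giving $\sum_{w\sim x}h(w,\cdot)-z\,h(x,\cdot)$ evaluated appropriately at each edge. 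I would track carefully where each variable lives, since $\cA$ and $P$ act on vertices while $H$, $\cB$ produce functions on $B$.

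Next I would compute the left-hand side. Applying $(\zeta^{-1}I-\cB)$ to $g=O\psi$ gives, on an edge $e=(x,y)$, the quantity $\zeta(x,y)^{-1}\psi(x)-\sum_{e'\leadsto e}\psi(o(e'))$, where $e'$ ranges over edges with terminus $x$ that are not the reverse of $e$; explicitly $\sum_{w\sim x,\,w\neq y}\psi(w)$. So $(\zeta^{-1}I-\cB)g(x,y)=\zeta(x,y)^{-1}\psi(x)-\sum_{w\sim x,w\neq y}\psi(w)$. Then I would apply $H$ to this function $\phi$, using the definition $H\phi(x)=\sum_{y\sim x}\frac{1}{2m(y)}\bigl(\zeta(y,x)\phi(y,x)-\phi(x,y)\bigr)$, and substitute the expression for $\phi$ on the edges $(y,x)$ and $(x,y)$.

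The main obstacle—and the crux of the proof—is that after substitution the two sides will only match once the algebraic relations of Lemma~\ref{lem:zetapot} are invoked to simplify. In particular I expect the reversal relations \eqref{e:reverse}, namely $\frac{1}{\zeta(w,v)}-\zeta(v,w)=2m(v)$ and $\zeta(w,v)=\frac{m(w)}{m(v)}\zeta(v,w)$, together with the recursion $z=\sum_{u\in\mathcal N_v\setminus\{w\}}\zeta(v,u)+\frac{1}{\zeta(w,v)}$, to be exactly what converts the $\zeta$-weighted sums arising from $H$ into the adjacency-matrix combination $\sum_{w\sim x}\psi(w)-z\psi(x)$ with the correct $\frac{1}{2m}$ prefactors. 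The delicate bookkeeping is keeping straight the orientation of each edge argument in $\zeta$ and $m$ (since $\zeta$ is \emph{not} edge-reversal invariant), and organizing the double sum over neighbours so that the ``diagonal'' terms (where the neighbour equals the reversed edge) and ``off-diagonal'' terms recombine cleanly.

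Concretely, I would isolate the coefficient of each $\psi(w)$ for $w\sim x$ on both sides and the coefficient of $\psi(x)$, and check these match termwise. The coefficient of $\psi(x)$ on the left, after applying $H$, should collapse to $-z\cdot\frac{D(x)}{2m(x)}$ using the first equation $z=\sum_{u\sim v}\zeta(v,u)+2m(v)$ of Lemma~\ref{lem:zetapot}; the coefficient of each $\psi(w)$ should collapse to $\frac{D(x)}{2m(x)}$ using the reversal identities so that the $m$-factors telescope. I expect no genuinely new idea is needed beyond a disciplined application of the lemma, but the sign and orientation conventions are where errors would creep in, so I would state the variable conventions ($m_1$, $m_2$, $o(e)$, $t(e)$) explicitly before starting the computation.
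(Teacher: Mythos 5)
There is a genuine gap at the very first step: the reduction ``it suffices to evaluate both sides on functions of the form $g(x,y)=\psi(x)$'' is not valid. The proposition asserts an equality of linear operators from $\IC^B$ to $\IC^V$, and $\IC^B$ has dimension $2|E|$ while the image of $O$ (equivalently $\mathrm{Im}\,\cP$) has dimension only $|V|$. That the \emph{right-hand side} factors through $\cP$ tells you it is determined by its restriction to $\mathrm{Im}\,\cP$; it tells you nothing about the left-hand side, and the assertion that $H\circ(\zeta^{-1}I-\cB)$ also annihilates $\mathrm{Ker}\,\cP$ is a nontrivial part of the proposition. It is moreover exactly the part the paper uses afterwards: writing $H=D\cP\circ K$, the vanishing of $\cP\circ K\circ(\zeta^{-1}I-\cB)\circ(I-\cP)$ is what shows that $K\circ(\zeta^{-1}I-\cB)$ preserves $\mathrm{Ker}\,\cP$ (where it acts as $-I$, via $\cB f=-\iota f$ and the reversal identity of Lemma \ref{lem:zetapot}), and hence is what makes the determinant factorize. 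Your plan would prove at best $H\circ(\zeta^{-1}I-\cB)\circ\cP=(\cA-zI)\circ L$, which is strictly weaker. The paper avoids this by taking $f\in\IC^B$ arbitrary: setting $g=-(\zeta^{-1}I-\cB)f$, one observes that $\phi=Lf$ admits, for \emph{each} neighbour $y$ of $x$, the representation $\phi(x)=(2m(x))^{-1}\bigl(f(x,y)+g(y,x)+f(y,x)/\zeta(y,x)\bigr)$; substituting this into $\cA\phi(x)=\sum_{y\sim x}\phi(y)$ and invoking Lemma \ref{lem:zetapot} yields the identity on all of $\IC^B$ in one stroke.

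A secondary warning: even on $\mathrm{Im}\,O$ your bookkeeping does not yet close. With the predecessor convention you adopt, the term $\zeta(y,x)\,\phi(y,x)$ contributes values $\psi(w)$ at vertices $w$ at distance $2$ from $x$, and these have no counterpart on the right-hand side, whose only neighbour contributions are $\sum_{w\sim x}\tfrac{D(w)}{2m(w)}\psi(w)$ (note also that this coefficient is $\tfrac{D(w)}{2m(w)}$, not the $\tfrac{D(x)}{2m(x)}$ you predict). So the orientation conventions you rightly flag are not cosmetic: as set up, the termwise matching you describe would fail, and the conventions must be straightened out before the computation can be carried through.
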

\begin{proof}
Let $\phi=Lf$ and $g=-(\zeta^{-1} I- \cB) f$. The latter relation implies that for any $y\sim x$,
$$\phi(x)=(2m(x))^{-1}\left(f(x, y)+g(y, x)+\frac{f(y, x)}{\zeta(y, x)}\right).$$
We then calculate
$$\cA\phi(x)=\sum_{y, y\sim x}\phi(y)=\sum_{y, y\sim x}\frac{1}{2m(y)}\left(f(y, x)+g(x, y)+\frac{f(x, y)}{\zeta(x, y)}\right).
$$
We now use Lemma \ref{lem:zetapot} to write
\begin{multline*}\sum_{y, y\sim x}\frac{1}{2m(y)}f(y, x)\\
=\sum_{y, y\sim x}\frac{1}{2m(y)}\left( \zeta(y, x) 2m(x)\phi(x)-\zeta(y, x) f(x, y)-\zeta(y, x)g(y, x)\right)\\
=\sum_{y, y\sim x}  \zeta(x, y) \phi(x)+\sum_{y, y\sim x}\frac{1}{2m(y)}\left(  -\zeta(y, x) f(x, y)-\zeta(y, x)g(y, x)\right)\\
= (z-2m(x)) \phi(x)+\sum_{y, y\sim x}\frac{1}{2m(y)}\left(  -\zeta(y, x) f(x, y)-\zeta(y, x)g(y, x)\right).
\end{multline*}
Altogether,
\begin{multline*}\cA\phi(x)=(z-2m(x)) \phi(x)+\sum_{y, y\sim x}\frac{1}{2m(y)}\left( \left(\frac{1}{\zeta(x, y)} -\zeta(y, x)\right) f(x, y)-\zeta(y, x)g(y, x)+g(x, y)\right)
\\= (z -2m(x)) \phi(x)+\sum_{y, y\sim x}  f(x, y)+ \sum_{y, y\sim x}\frac{1}{2m(y)} \left(-\zeta(y, x)g(y, x)+g(x, y)\right)\\=(z-2m(x)) \phi(x)+2m(x)\phi(x)+ \sum_{y, y\sim x}\frac{1}{2m(y)} \left(-\zeta(y, x)g(y, x)+g(x, y)\right)\\
=z \phi(x)+\sum_{y, y\sim x} \frac{1}{2m(y)}\left(-\zeta(y, x)g(y, x)+g(x, y)\right)
\end{multline*}
which is the desired relation.
\end{proof}

We note that $Hg=-D\left(\cP((2m_2)^{-1}g)+\cP((2m_2)^{-1}\iota(\zeta g))\right)$ where $\iota f(x, y)=f(y, x)$
is the edge reversal involution. So $H$ itself is of the form 
$H= D\cP\circ K$ where $K=(2m_2)^{-1}(\iota\zeta-I)$.
We have proven that
$$D\cP\circ K\circ (\zeta^{-1} I- \cB)=(A-z)\circ (2m_1)^{-1} D\cP.$$
This is equivalent to the two relations 
$$\cP\circ K\circ (\zeta^{-1} I- \cB)\circ P= D^{-1}(A-z)\circ (2m_1)^{-1} D \cP$$
and 
$$\cP\circ K\circ (\zeta^{-1} I- \cB)\circ (I-\cP)=0.$$
The latter implies that $K\circ (\zeta^{-1} I- \cB)$ sends $\mathrm{Ker } \cP$ to itself.

We use the decomposition ${\IC}^B = \mathrm{Im } \cP \oplus \mathrm{Ker } \cP$. The two relations above tell us that
$$\det [ K\circ (\zeta^{-1} I- \cB)]=\det[ (A-z)\circ (2m_1)^{-1}] \times \det[ K\circ (\zeta^{-1} I- \cB)]_{\mathrm{Ker } \cP \To \mathrm{Ker } \cP}.$$
But $f\in \mathrm{Ker } \cP$ is equivalent to $\cB f=-\iota f$. Thus if $f\in \mathrm{Ker } \cP$,
$$K\circ (\zeta^{-1} I- \cB)f(x, y)=(2m(y))^{-1}f(x, y)\left(\zeta(y, x)-\frac{1}{\zeta(x, y)}\right)= -f(x, y).$$
Finally we obtain
\begin{multline*}\det K \det (\zeta^{-1} I- \cB)=\det (A-z)  \prod_{x\in V} (2m(x))^{-1} \,  (-1)^{\dim  \mathrm{Ker } \cP} = \det (A-z)  \prod_{x\in V} (-G(x))\,  (-1)^{\dim  \mathrm{Ker } \cP}
\\= \det (A-z)  \prod_{x\in V} (-G(x)) (-1)^{|B|-|V|}=\det (z-A)  \prod_{x\in V} (-G(x))
\end{multline*}
since $|B|=2|E|$ is even.

To prove the determinant relation, there remains to compute $\det K$. But $K$ is diagonal by blocks of size $2$ in the canonical basis of $\IC^{B}$. More precisely, denoting by $\delta_e$ the element of $\IC^{B}$ that takes the value $1$ on $e$ and $0$ elsewhere,
$$K\delta_e=-\frac{1}{2m(y)}\delta_e+\frac{\zeta(x, y)}{2m(x)}\delta_{\hat e}$$
if $e=(x, y)$. Thus
$$\det K=\prod_{e=\{x, y\}\in E}(2m(x))^{-1}(2m(y))^{-1}(1-\zeta(x, y)\zeta(y, x))=\prod_{e=\{x, y\}\in E} (-G(x, y)).$$
This yields the announced relation.

\bibliographystyle{plain}
\bibliography{biblio-lemasson} 

\end{document}